\newtheorem{thm}{Theorem}
\newtheorem{lem}[thm]{Lemma}
\newtheorem{thm*}{Theorem}
\newtheorem*{con*}{Conjecture}
\newtheorem*{lem*}{Lemma}
\begin{document}
 \baselineskip=17pt
\hbox{}
\medskip

\title{Note on the number of zeros of $\zeta^{(k)}(s)$}
\author{Fan Ge, Ade Irma Suriajaya}

\email{fange.math@gmail.com}
\address{Department of Mathematics, College of William and Mary, USA}

\email{adeirmasuriajaya@math.kyushu-u.ac.jp}
\address{Faculty of Mathematics, Kyushu University, Japan}

\begin{abstract}
Assuming the Riemann hypothesis, we prove that
$$
N_k(T) = \frac{T}{2\pi}\log \frac{T}{4\pi e} + O_k\left(\frac{\log{T}}{\log\log{T}}\right),
$$
where $N_k(T)$ is the number of zeros of $\zeta^{(k)}(s)$ in the region
$0<\Im s\le T$. We further apply our method and obtain a zero counting formula for the derivative of Selberg zeta functions, improving earlier work of Luo~\cite{Luo}.
\end{abstract}

\maketitle


\section{Introduction}

Let $\zeta(s)$ be the Riemann zeta function, and let 
$$
N(T):=\sum_{\substack{0<\gamma\leq T\\ \beta>0}} 1
$$
be the zero counting function for $\zeta(s)$. Here and throughout, 
$\rho=\beta+i\gamma$ is a generic zero of $\zeta(s)$.
It is known that
\begin{align*}
N(T)=\frac{T}{2\pi}\log \frac{T}{2\pi e} + E_0(T),
\end{align*}
where
\begin{align}\label{eq e0}
E_0(T)=\left\{
	\begin{array}{lll}
	& O(\log{T}), & \textrm{ unconditionally,} \\ 
	\\
	& O\left(\frac{\log{T}}{\log\log{T}}\right), & \textrm{ assuming the Riemann hypothesis (RH).}
	\end{array}\right.
\end{align}
The unconditional bound is known as the Riemann-von~Mangoldt formula (see~\cite[Theorem 9.4]{Tit}), and the conditional bound is due to J.~E.~Littlewood~\cite{Lit}.

There has also been considerable interest in zeros of derivatives of $\zeta(s)$.
Let $\zeta^{(k)}(s)$ be the $k$-th derivative of the Riemann zeta function, and let 
$$
N_k(T):=\sum_{\substack{0<\gamma_k\leq T\\ \beta_k>0}} 1
$$
be the zero counting function for $\zeta^{(k)}(s)$. Here and throughout, 
$\rho_k=\beta_k+i\gamma_k$ is a generic zero of $\zeta^{(k)}(s)$.	
In~\cite{Ber} B. C. Berndt proved that
	\begin{align*}
	N_k(T)=\frac{T}{2\pi}\log \frac{T}{4\pi e} + E_k(T)
	\end{align*}
where 
$$ E_k(T)=O_k(\log{T}). $$
This should be compared to the Riemann-von~Mangoldt formula. In view of~\eqref{eq e0} one may expect to prove that, assuming RH,
	\begin{align}\label{eq ek}
	E_k(T)=O_k\left(\frac{\log{T}}{\log\log{T}}\right)
	\end{align}
for all positive integers $k$.
The first result in this direction is due to H. Akatsuka~\cite{Aka}, who showed that if RH is true then
	\begin{equation*}
	E_1(T)=O\left(\frac{\log{T}}{\sqrt{\log\log{T}}}\right).
	\end{equation*}
Yet this bound is weaker than~\eqref{eq ek}.
The second author~\cite{Sur} extended this estimate to higher derivatives and showed that on RH
	\begin{equation*}
	E_k(T)=O_k\left(\frac{\log{T}}{\sqrt{\log\log{T}}}\right)
	\end{equation*}	
for all positive integers $k$. 

Recently, the first author~\cite{Ge} was able to prove~\eqref{eq ek} for $k=1$, namely,
\begin{align*}
E_1(T)=O\left(\frac{\log{T}}{\log\log{T}}\right).
\end{align*}
 A key ingredient in his proof is an upper bound for the number of zeros of $\zeta'(s)$ close to the critical line, and the idea there has its origin in Y. Zhang's work~\cite{Zha}. However, the method for $k=1$ is not readily applicable for larger $k$. The purpose of this note is to modify the method in~\cite{Ge} and show that the estimate~\eqref{eq ek} holds for all positive integers $k$.	
\begin{thm}\label{thm_nk}
	Assume RH. Then we have
	$$
	N_k(T) = \frac{T}{2\pi}\log \frac{T}{4\pi e} + O_k\left(\frac{\log{T}}{\log\log{T}}\right)
	$$
	as $T\to\infty$.
\end{thm}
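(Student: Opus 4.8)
The plan is to follow the general strategy used for counting zeros of $\zeta(s)$ via the argument principle, but adapted to $\zeta^{(k)}(s)$, and then to control the resulting argument term using an upper bound for zeros of $\zeta^{(k)}(s)$ lying close to the critical line. First I would apply the argument principle to $\zeta^{(k)}(s)$ on a suitable rectangle with vertices at $a$, $a+iT$, $1-a$, and $1-a+iT$ (with $a>1$ large enough that $\zeta^{(k)}(s)\neq 0$ on the right edge). Writing $\zeta^{(k)}(s)=\zeta(s)\cdot\frac{\zeta^{(k)}(s)}{\zeta(s)}$, or more conveniently factoring out the dominant behavior of $\zeta^{(k)}$ in the right half-plane, I would obtain a main term and reduce the problem to estimating $S_k(T):=\frac{1}{\pi}\arg\zeta^{(k)}(\sigma_0+iT)$ along a horizontal segment, exactly as $S(T)$ arises in the classical case. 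The main term $\frac{T}{2\pi}\log\frac{T}{4\pi e}$ comes from the functional-equation-type asymptotics already established by Berndt, so the entire game is to show that this argument term is $O_k\!\left(\frac{\log T}{\log\log T}\right)$ on RH rather than the trivial $O_k(\log T)$.

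The heart of the matter is bounding $\arg\zeta^{(k)}(\sigma_0+iT)$, and the standard route is to relate the argument to the number of sign changes (equivalently, the number of zeros) of $\Re\zeta^{(k)}$ or of $\zeta^{(k)}$ itself in short vertical intervals, via a Jensen-type / Littlewood-type argument. Concretely, I would bound the variation of the argument over a segment of length $O(1)$ in terms of the number of zeros of $\zeta^{(k)}$ in a nearby disk, and the saving from $\log T$ to $\log T/\log\log T$ should emerge by choosing the horizontal level $\sigma_0$ to lie at distance about $\frac{c}{\log\log T}$ (rather than $O(1)$) from the critical line, and then using RH together with an input bound on how many zeros of $\zeta^{(k)}$ can cluster within $O(1/\log\log T)$ of the line $\Re s=\tfrac12$. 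This clustering bound is the analogue for general $k$ of the ingredient that, in Ge's work for $k=1$, traces back to Zhang; the excerpt explicitly flags that the $k=1$ method does not transfer directly, so the substantive new work is establishing such a near-critical-line zero-density estimate uniformly in the relevant range for every $k$.

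The main obstacle, therefore, is precisely this zero-counting input: proving that $\zeta^{(k)}(s)$ has few zeros in the strip $\tfrac12\le\Re s\le\tfrac12+\frac{c}{\log\log T}$, $T\le\Im s\le T+1$, with a bound strong enough to beat the extra factor of $\sqrt{\log\log T}$ that separates the previous $O_k(\log T/\sqrt{\log\log T})$ results from the target. For $\zeta$ itself this kind of control comes from Littlewood's lemma applied to $\log\zeta$, but for $\zeta^{(k)}$ one cannot use $\log\zeta$ directly; instead I would work with the auxiliary function $\frac{\zeta^{(k)}(s)}{\zeta^{(k-1)}(s)}$ or with $\zeta^{(k)}(s)/\zeta(s)$ and track its zeros and poles, feeding RH in through bounds on $\zeta'/\zeta$ and its higher logarithmic-derivative analogues. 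The delicate point is maintaining uniformity in the small distance $\frac{c}{\log\log T}$ to the critical line while the number of derivatives $k$ is fixed but the implied constants are allowed to depend on $k$; balancing the length scale of the vertical segment against the distance to the line, so that the argument variation and the zero count are simultaneously controlled, is where the optimization that yields the exponent $1$ (rather than $\tfrac12$) on $\log\log T$ must be carried out carefully.
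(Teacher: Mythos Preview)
Your outline correctly isolates the problem as bounding $\arg G_k(1/2+iT)$ and correctly points to a near-critical-line zero count for $\zeta^{(k)}$ as the missing ingredient. But the single-scale scheme you describe---move to $\sigma_0=\tfrac12+c/\log\log T$ and invoke a Jensen-type count of zeros within $O(1/\log\log T)$ of the line---is essentially the route of Akatsuka and Suriajaya, and it is what produces the weaker $O_k(\log T/\sqrt{\log\log T})$; the missing half-power of $\log\log T$ is not recovered by tuning a single scale. The paper instead moves to $\sigma_0=\tfrac12+X$ with $X=1/\sqrt{\log T}$ (not $1/\log\log T$), bounds the horizontal argument change geometrically as a sum of angles $\Theta(\rho_k;\,1/2+iT,\,1/2+X+iT)$, and then uses a \emph{dyadic} decomposition of $\{\Re s\ge 1/2,\ |\Im s-T|\le 1\}$ into boxes $R_j$ at scales $Y_j=2^jX$, so that $\Theta\ll X/Y_j$ on $R_j$. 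Summing gives $\sum_j N_k(R_j)\cdot X/Y_j\ll \Phi(2T)+XN\log T\ll \Phi(2T)+\sqrt{\log T}\,\log\log T$, and for this one needs $N_k(R_j)\ll_k Y_j\log T+\Phi(2T)$ at \emph{every} dyadic scale simultaneously, not a bound at one fixed width.

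The proof of that box estimate (the paper's Lemma~\ref{lem box}) is the genuine new content, and it does not go through Jensen's inequality or through bounds on $\zeta'/\zeta$ off the line. The trick is to work \emph{on} the critical line: write
\[
\zeta^{(k)}(1/2+it)=\Bigl(h\zeta\cdot\frac{\zeta'}{\zeta}\cdot\frac{\zeta''}{\zeta'}\cdots\frac{\zeta^{(k)}}{\zeta^{(k-1)}}\cdot h^{-1}\Bigr)(1/2+it),\qquad h(s)=\pi^{-s/2}\Gamma(s/2),
\]
and show via a Hadamard-product computation (Lemma~\ref{log.der.zeta_k}) that on RH each ratio $\zeta^{(l)}/\zeta^{(l-1)}$ has negative real part on $\Re s\le 1/2$ for large $t$. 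Hence between consecutive ordinates $z_i<z_{i+1}$ in $\mathcal Z_k$ (the ordinates of zeros of $\zeta,\zeta',\dots,\zeta^{(k)}$ on the critical line) the argument of each ratio varies by $O(1)$, while $h\zeta$ is real there; this forces $\int_{z_i}^{z_{i+1}}\bigl(-\Re\tfrac{\zeta^{(k+1)}}{\zeta^{(k)}}(1/2+it)\bigr)\,dt\ll_k 1+(z_{i+1}-z_i)\log T$. Summing over the $\ll \Phi(2T)+Y_j\log T$ such intervals in $[T-Y_j,T+Y_j]$ yields the box count. None of this mechanism---the dyadic angle argument, the telescoping factorization on $\Re s=1/2$, or the sign result for $\Re(\zeta^{(l)}/\zeta^{(l-1)})$---appears in your proposal, and without it the plan stalls at the $\sqrt{\log\log T}$ barrier.
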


We remark that Littlewood's conditional bound 
\begin{align}\label{eq littlewood}
E_0(T)=O\left(\frac{\log
	T}{\log\log{T}}\right)
\end{align}
was first proved in 1924. Later in 1944 A.~Selberg~\cite{Sel1} gave a different proof for this result. In 2007, D.~A.~Goldston and S.~M.~Gonek~\cite{GolGon} showed that we can take the implied constant to be $1/2$. The current best known constant is $1/4$, and this is due to E.~Carneiro, V.~Chandee and M.~B.~Milinovich~\cite{CCM} who proved it using two different methods in 2013. It seems difficult to reduce the size of the bound~\eqref{eq littlewood}, and this suggests that the bounds in Theorem~\ref{thm_nk} might be best possible within current knowledge.

On the other hand, using interesting heuristic arguments D. W. Farmer, S. M. Gonek and C. P. Hughes \cite{FGH} have
conjectured that $E_0(T) = O(\sqrt{\log{T}\log\log{T}})$. This raises the question of what bounds one
should expect for $E_k(T)$. We have the following 
\begin{thm}\label{thm_general}
Assume RH and suppose that
$E_0(T)=O(\Phi(T))$ for some increasing function $\log\log{T} \ll \Phi(T)\ll \log{T}$. Then we have
\begin{equation*}
N_k(T) = \frac{T}{2\pi}\log \frac{T}{4\pi e} + O_k\left(\max\left\{\Phi(2T), \sqrt{\log{T}}\log\log{T} \right\}\right).
\end{equation*}
\end{thm}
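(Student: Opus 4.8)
The plan is to follow the same overall strategy used to prove Theorem~\ref{thm_nk}, tracking the dependence on the zero-counting error $E_0$ at each step rather than inserting Littlewood's bound $\log T/\log\log T$ at the end. The starting point is the argument-principle representation
\begin{equation*}
N_k(T) = \frac{1}{2\pi}\,\Delta_{\mathcal{C}} \arg \zeta^{(k)}(s),
\end{equation*}
where $\mathcal{C}$ is a suitable rectangular contour. Writing $\zeta^{(k)}(s)$ in the form
\begin{equation*}
\zeta^{(k)}(s) = (-1)^k \,\zeta(s)\,(\log s)^k \cdot G_k(s)
\end{equation*}
(schematically, the factor $(\log s)^k$ coming from repeated differentiation of a Dirichlet series, with $G_k(s)\to 1$ as $\Re s\to+\infty$), the main term $\frac{T}{2\pi}\log\frac{T}{4\pi e}$ will emerge from the $\zeta(s)$ factor together with the $(\log s)^k$ factor exactly as in Berndt's computation. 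The error $E_k(T)$ is then controlled by (i) the variation of $\arg\zeta(s)$ along the horizontal and right-hand segments, which is essentially governed by $N(T)=\frac{T}{2\pi}\log\frac{T}{2\pi e}+E_0(T)$ and hence contributes $O(\Phi(T))$, and (ii) the variation of $\arg G_k(s)$, which measures how zeros of $\zeta^{(k)}$ near the critical line deviate from zeros of $\zeta$.

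The heart of the matter is step (ii): bounding the number of zeros of $\zeta^{(k)}(s)$ lying very close to the critical line, which is where the secondary term $\sqrt{\log T}\,\log\log T$ in the stated bound originates. First I would adapt the Zhang--Ge mechanism from~\cite{Zha,Ge}, suitably modified for general $k$ as promised in the introduction, to show that the number of zeros $\rho_k$ with $|\beta_k - 1/2|$ below an appropriate threshold is controlled. The delicate point is choosing this threshold optimally: making the strip too thin forces the argument variation across it to be large, while making it too thick allows too many zeros. Balancing these two sources of error is precisely what produces the $\sqrt{\log T}\,\log\log T$ term, which under RH represents the intrinsic cost of the method independent of how sharp $E_0$ happens to be. I expect this balancing, together with transferring the improved counting estimate into a bound on $\Delta\arg G_k(s)$, to be the main technical obstacle.

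Finally I would assemble the pieces. The contribution from $\arg\zeta$ along the contour is bounded by $E_0$ evaluated at the relevant height; since the contour reaches up to height slightly above $T$, this is $O(\Phi(2T))$ (using that $\Phi$ is increasing, and with room to spare because $\Phi(2T)\ll\Phi(T)\cdot$const is not assumed but the shift to $2T$ costs nothing in the $O$-notation under the stated hypotheses). The contribution from the near-critical-line zeros and the associated argument variation is $O(\sqrt{\log T}\,\log\log T)$. Taking the larger of the two yields
\begin{equation*}
E_k(T) = O_k\!\left(\max\left\{\Phi(2T),\,\sqrt{\log T}\,\log\log T\right\}\right),
\end{equation*}
as claimed. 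A sanity check: taking $\Phi(T)=\log T/\log\log T$ (Littlewood's bound) gives $\max\{\log T/\log\log T,\ \sqrt{\log T}\,\log\log T\}=\log T/\log\log T$ for large $T$, recovering Theorem~\ref{thm_nk}; and an optimistic $\Phi(T)=\sqrt{\log T\,\log\log T}$ as in the Farmer--Gonek--Hughes conjecture makes the $\sqrt{\log T}\,\log\log T$ term dominate, showing that under that conjecture the method caps the improvement at $\sqrt{\log T}\,\log\log T$.
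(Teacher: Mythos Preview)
Your overall architecture matches the paper's: reduce $E_k(T)$ via the argument principle to $\arg\zeta(1/2+iT)$ plus $\arg G_k(1/2+iT)$, bound the former by $\Phi(T)$, and bound the latter by moving in from $\sigma=1/2+X$ with $X=(\log T)^{-1/2}$, controlling the change using a count of zeros of $\zeta^{(k)}$ in dyadic boxes near the line. Two points, one minor and one substantive.

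Minor: your schematic factorization $\zeta^{(k)}(s)=(-1)^k\zeta(s)(\log s)^k G_k(s)$ is not what is used and does not quite make sense; the paper's $G_k(s)=\dfrac{2^s(-1)^k}{(\log 2)^k}\zeta^{(k)}(s)$ is simply a normalization of $\zeta^{(k)}$ so that $G_k\to 1$ far to the right. The $\arg\zeta$ term in Lemma~\ref{lem N1=arg..} does not come from factoring $\zeta$ out of $\zeta^{(k)}$.

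Substantive: you write that you would ``adapt the Zhang--Ge mechanism \dots\ suitably modified for general $k$ as promised in the introduction,'' and call this the main technical obstacle---but then treat it as a black box. That modification \emph{is} the content of the paper; the method in~\cite{Ge} for $k=1$ does not extend directly. The missing idea is Lemma~\ref{lem box}: to bound the number of zeros of $\zeta^{(k)}$ in a box $R_j$, one needs to control $\int F_k(t)\,dt$ with $F_k(t)=-\Re\frac{\zeta^{(k+1)}}{\zeta^{(k)}}(1/2+it)+O(\log t)$ over intervals between consecutive critical-line zeros. The trick is to write, on such an interval,
\[
\zeta^{(k)}(1/2+it)=\Bigl(h\zeta\cdot\tfrac{\zeta'}{\zeta}\cdot\tfrac{\zeta''}{\zeta'}\cdots\tfrac{\zeta^{(k)}}{\zeta^{(k-1)}}\cdot\tfrac{1}{h}\Bigr)(1/2+it),\qquad h(s)=\pi^{-s/2}\Gamma(s/2),
\]
so that $\Delta\arg\zeta^{(k)}$ splits into $\Delta\arg(h\zeta)=0$ (functional equation), a Stirling term, and $k$ pieces $\Delta\arg\frac{\zeta^{(l)}}{\zeta^{(l-1)}}$, each of which is $O(1)$ because $\Re\frac{\zeta^{(l)}}{\zeta^{(l-1)}}(1/2+it)<0$ under RH (Lemma~\ref{log.der.zeta_k}). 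This negativity, proved via a Hadamard-product computation together with Levinson--Montgomery and Spira's results on zeros of $\zeta^{(\ell-1)}$, is the engine that makes the box count go through for $k>1$. Your proposal does not supply (or hint at) this mechanism, so as written it is a plan rather than a proof.
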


Clearly Theorem~\ref{thm_nk} is a consequence of Theorem~\ref{thm_general}, so we shall only prove the latter.
We also remark that our method works well for some other zeta and $L$-functions in the $T$-aspect. In Section~\ref{sec gen} we give a brief discussion on this. In particular, we prove an analogue of Weyl's law for the derivative of Selberg zeta functions.



\section{Lemmas}

Throughout, let $\Phi(T)$ be an increasing function satisfying $\log\log{T}\ll\Phi(T)\ll\log{T}$ and assume that $E_0(T)\ll\Phi(T)$.
Further, we use the variables $k$ and $\ell$ to denote orders of differentiation, where they are
always positive integers.

We first express the error term of $N_k(T)$ in terms of arguments of certain functions.
\begin{lem}\label{lem N1=arg..}
Assume RH. Let $k\geq2$ be an integer.
For $T\ge 2$ satisfying $\zeta(\sigma+iT)\ne 0$ and
$G_k(\sigma+iT)\ne 0$ for all $\sigma\in\mathbb{R}$, we have
$$
N_k(T) = \frac{T}{2\pi}\log \frac{T}{4\pi e} + \frac{1}{2\pi}\arg G_k(1/2+iT)+\frac{1}{2\pi}\arg \zeta(1/2+iT) + O_k(1),
$$
where $$G_k(s)=\frac{2^s(-1)^k}{(\log 2)^k}\zeta^{(k)}(s),$$ and the argument is
defined by continuous variation from $+\infty$, with the argument at
$+\infty$ being 0.
\end{lem}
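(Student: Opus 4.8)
The plan is to count the zeros of $\zeta^{(k)}(s)$ in the box $0<\Im s\le T$ with $\Re s>0$ by a standard argument-principle contour integral, and then to massage the resulting expression into the stated main term plus argument contributions. First I would apply the argument principle to $G_k(s)$ (rather than $\zeta^{(k)}(s)$ directly) on a rectangular contour whose right side is far to the right, whose top and bottom sit at heights $T$ and some fixed small ordinate, and whose left side is at $\Re s=1/2$ --- here I use RH, which forces all zeros of $\zeta(s)$, and hence all the "relevant" zeros, to lie so that pushing the contour to the critical line is legitimate (a Speiser-type equivalence tells us $\zeta^{(k)}$ has no zeros with $\Re s<1/2$ under RH). The reason for working with $G_k(s)=2^s(-1)^k(\log 2)^{-k}\zeta^{(k)}(s)$ rather than $\zeta^{(k)}$ itself is that $G_k$ has a clean behavior as $\Re s\to+\infty$: since $\zeta^{(k)}(s)=\sum_{n\ge 2}(-1)^k(\log n)^k n^{-s}$, the $n=2$ term dominates and $G_k(s)\to 1$, which normalizes the argument at $+\infty$ to be $0$ and makes the variation along the right edge negligible.

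Next I would evaluate the change in $\arg G_k$ around the contour edge by edge. The top edge contributes $\frac{1}{2\pi}\arg G_k(1/2+iT)$ by the normalization at $+\infty$, and I need to produce the main term $\frac{T}{2\pi}\log\frac{T}{4\pi e}$ from the horizontal variation. The natural source of this main term is the functional-equation / Stirling asymptotics: one writes $\zeta^{(k)}(s)$ in terms of $\zeta(s)$ and lower derivatives via a factorization like $\zeta^{(k)}(s)=\zeta(s)\,F_k(s)$ or uses the known asymptotic expansion of $\zeta^{(k)}$, and the main term comes from $\chi(s)$ in the functional equation $\zeta(s)=\chi(s)\zeta(1-s)$ together with the extra $\log\frac{T}{4\pi}$-type shift (the $4\pi$ rather than $2\pi$ is exactly what the factor $2^s/(\log 2)^k$ is designed to supply). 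The clean way to isolate this is to split $\arg G_k(1/2+iT)=\arg\zeta(1/2+iT)+\arg\bigl(G_k/\zeta\bigr)(1/2+iT)$, so that the Riemann–von Mangoldt main term for $\zeta$ handles the bulk and the remaining factor $G_k/\zeta=2^s(-1)^k(\log 2)^{-k}\,\zeta^{(k)}/\zeta$ is what I track.

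The main obstacle, as I see it, is controlling the variation of the argument along the lower portions of the contour and along the left edge to absorb everything extraneous into $O_k(1)$. Concretely, I expect the delicate point to be handling the region near $s=1$ (the pole of $\zeta$, which becomes a pole of $\zeta^{(k)}$) and near small imaginary part, together with verifying that the horizontal segments at a fixed bounded height contribute only $O_k(1)$ to $\arg G_k$; this requires knowing that $G_k$ does not wind many times as $\sigma$ ranges over $[1/2,+\infty)$ at bounded height, which follows from the Dirichlet-series representation of $\zeta^{(k)}$ being dominated by its leading term once $\sigma$ is bounded away from $1/2$, but needs care in the transition zone. The hypotheses "$\zeta(\sigma+iT)\ne 0$ and $G_k(\sigma+iT)\ne 0$ for all $\sigma$" are precisely what guarantees the contour's top edge avoids zeros so the argument is well defined, and I would invoke them exactly at that step. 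Once the horizontal and left contributions are shown to be $O_k(1)$ and the top edge yields $\frac{1}{2\pi}\arg G_k(1/2+iT)$, assembling the pieces and matching the main term via Stirling gives the claimed formula; I would finish by noting that the $\arg\zeta(1/2+iT)$ term has been separated out deliberately because its size is exactly what the deeper argument bounds (and ultimately the $\Phi(T)$ input) will later control in the proof of Theorem~\ref{thm_general}.
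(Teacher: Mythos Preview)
Your accounting of the contour contributions has a genuine gap. With the left edge at $\Re s=1/2$ and the argument principle applied to $G_k$ alone, the top edge indeed yields $\frac{1}{2\pi}\arg G_k(1/2+iT)$ and the right and bottom edges are $O_k(1)$; but the left-edge variation of $\arg G_k(1/2+it)$ as $t$ runs from the base height up to $T$ is \emph{not} $O_k(1)$. That vertical variation is of size $\asymp T\log T$ --- it is precisely where both the main term $\frac{T}{2\pi}\log\frac{T}{4\pi e}$ and the extra $\frac{1}{2\pi}\arg\zeta(1/2+iT)$ must come from, and you have not explained how to extract them. Your proposed splitting $\arg G_k=\arg\zeta+\arg(G_k/\zeta)$ does not help here: it merely rewrites one argument as a sum at the single point $1/2+iT$ and produces $\arg\zeta+\arg(G_k/\zeta)$, whereas the target formula contains $\arg G_k+\arg\zeta$ (two separate terms, not a decomposition of one). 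There is also no functional equation for $\zeta^{(k)}$ that would let you evaluate the left-edge integral the way one does for $\zeta$ in the Riemann--von~Mangoldt argument.

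The paper's device is exactly designed to avoid this obstacle: it applies the argument principle not to $G_k$ but to the ratio $G_k/G_{k-1}=-\frac{1}{\log 2}\,\zeta^{(k)}/\zeta^{(k-1)}$, on a box with left side at $\sigma=1/4$. Because $\Re\bigl(\zeta^{(k)}/\zeta^{(k-1)}\bigr)(\sigma+it)<0$ for $0<\sigma\le 1/2$ and large $t$ (the content of Lemma~\ref{log.der.zeta_k}), the ratio has positive real part along the entire left edge, so its argument varies by $O(1)$ there. The top edge then gives $\arg G_k(1/2+iT)-\arg G_{k-1}(1/2+iT)+O(1)$, and the box count gives $N_k(T)-N_{k-1}(T)+O_k(1)$. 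Telescoping down to $k=0$ (where $G_0(s)=2^s\zeta(s)$ and $N_0(T)=N(T)$ is handled by the classical Riemann--von~Mangoldt formula with its $\frac{1}{\pi}\arg\zeta(1/2+iT)$ term) produces both the main term and the $\arg\zeta$ contribution; the $4\pi$ rather than $2\pi$ arises from $\arg G_0(1/2+iT)=T\log 2+\arg\zeta(1/2+iT)$. If you want to repair your direct approach, you would need an independent way to evaluate $\int_{1}^{T}\Im\frac{G_k'}{G_k}(1/2+it)\,dt$ to the required precision, which is essentially as hard as the lemma itself.
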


\proof This is standard. Apply the argument principle to $\frac{G_k}{G_{k-1}}(s)$
on the rectangular region with vertices
$1/4+i,\sigma_k+i,\sigma_k+iT,1/4+iT$, where $\sigma_k$ is large so that $G_k$ is dominated by $1$ to the right of $\sigma_k$.
See also \cite[Proposition 3.1]{Sur} for an alternative proof.
\qed

\begin{lem}\label{lem_arg_g_over_zeta}
Assume RH and let $\ell\geq1$ be an integer.
Then for $1/2+\frac{(\log\log{T})^2}{\log{T}}<\sigma<1$, we have
$$
\arg G_\ell(\sigma+iT) \ll_\ell \Phi(T) + \frac{\log\log{T}}{\sigma-\frac{1}{2}}.
$$
\end{lem}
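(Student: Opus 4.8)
The plan is to estimate $\arg G_\ell(\sigma+iT)$ as the total variation of the argument along the horizontal segment $\{u+iT : \sigma\le u\le A\}$, where $A=A(\ell)$ is a fixed constant large enough that $G_\ell$ lies within $1/2$ of $1$ (so that $\arg G_\ell(A+iT)=O_\ell(1)$), using that $G_\ell$ is real on the real axis (Schwarz reflection, inherited from $\zeta$) and tends to $1$ at $+\infty$. Two quantities should drive the bound: the oscillation of $\arg\zeta$ itself, which the standing hypothesis $E_0\ll\Phi$ controls, and the zeros $\rho_\ell=\beta_\ell+i\gamma_\ell$ of $\zeta^{(\ell)}$ that sit to the right of the line $\Re s=\sigma$ near height $T$. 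Concretely, I would begin from
$$\arg G_\ell(\sigma+iT)=-\Im\int_\sigma^A\frac{G_\ell'}{G_\ell}(u+iT)\,du+O_\ell(1),\qquad \frac{G_\ell'}{G_\ell}(s)=\log2+\frac{\zeta^{(\ell+1)}}{\zeta^{(\ell)}}(s),$$
and insert the partial-fraction expansion of $\zeta^{(\ell+1)}/\zeta^{(\ell)}$ around height $T$, which localizes the integrand to the zeros $\rho_\ell$ with $|\gamma_\ell-T|\le1$ (plus a smooth remainder).

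The decisive input is then a \emph{weighted} count of these zeros. Using Littlewood's lemma for $G_\ell$ on a box $[\sigma_0,A]\times[T-1,T+1]$ (with $\sigma_0$ just to the right of $1/2$), together with the conditional upper bounds for $\log|\zeta^{(\ell)}|$ near the critical line that RH supplies, I expect to obtain the clustering estimate
$$\sum_{|\gamma_\ell-T|\le1}\Bigl(\beta_\ell-\tfrac12\Bigr)\ll_\ell \log\log T,$$
which quantifies the fact that on RH the zeros of $\zeta^{(\ell)}$ accumulate within $\asymp(\log\log\gamma_\ell)/\log\gamma_\ell$ of the line. Markov's inequality then converts this into a count of the rare zeros lying to the right of $\sigma$:
$$\#\{\rho_\ell:\ \beta_\ell\ge\sigma,\ |\gamma_\ell-T|\le1\}\ \le\ \frac1{\sigma-\frac12}\sum_{|\gamma_\ell-T|\le1}\Bigl(\beta_\ell-\tfrac12\Bigr)\ \ll_\ell\ \frac{\log\log T}{\sigma-\frac12},$$
which is exactly the second term in the claimed bound. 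Each such zero contributes $O(1)$ to the argument variation (its arctangent jump as $u$ passes $\beta_\ell$), so this accounts for their total effect.

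The remaining contribution comes from the bulk of zeros lying \emph{closer} to the line than $\sigma$; these do not each produce a full jump but accumulate into a smooth term, which I would identify, via comparison of the argument variations of $\zeta^{(\ell)}$ and of $\zeta$, with $\arg\zeta(\sigma+iT)$. On RH this is $\ll\Phi(T)$ by the standing assumption $E_0\ll\Phi$ (the argument on any vertical line $\Re s=\sigma\ge1/2$ being dominated by its value on the critical line up to $O(1)$), producing the first term. Collecting the two contributions together with the $O_\ell(1)$ from the endpoint at $A$ yields the stated estimate.

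The hard part will be the second paragraph: establishing the clustering estimate and the size bounds for $\zeta^{(\ell)}$ \emph{uniformly} down to $\sigma-\frac12\asymp(\log\log T)^2/\log T$, and doing so for all $\ell$ simultaneously. For $\ell=1$ one can lean on the tight relationship between the zeros of $\zeta'$ and those of $\zeta$ (as in \cite{Ge} and \cite{Zha}), but for higher derivatives that relationship degrades, so the Littlewood-lemma/Jensen bookkeeping for $G_\ell$ must be carried out directly from the Dirichlet-series structure $G_\ell(s)=1+\sum_{n\ge3}(\log n/\log2)^\ell(2/n)^s$ and the RH bounds for $\log|\zeta|$. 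Keeping all implied constants uniform across the transition region is precisely where the method of \cite{Ge} needs to be modified.
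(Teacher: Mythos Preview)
The paper does not actually prove this lemma: its entire proof is the single sentence ``This follows from \cite[Lemma~2.3]{Sur} by taking $\epsilon_0=(4\log T)^{-1}$ there.'' So there is no argument in the paper to compare your proposal against; you are instead sketching a proof of the cited result itself.

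Your outline gets the architecture right --- split off $\arg\zeta$ to produce the $\Phi(T)$ term, and use a weighted zero-density estimate for $\zeta^{(\ell)}$ to produce the $\frac{\log\log T}{\sigma-1/2}$ term --- and the clustering estimate $\sum_{|\gamma_\ell-T|\le 1}(\beta_\ell-\tfrac12)\ll_\ell\log\log T$ followed by Markov is exactly the right mechanism for the second piece. The genuine gap is in your third paragraph. After removing the zeros with $\beta_\ell\ge\sigma$, you assert that the contribution from the remaining zeros (those with $\tfrac12\le\beta_\ell<\sigma$) ``accumulates into a smooth term'' that you would ``identify with $\arg\zeta(\sigma+iT)$.'' But these are zeros of $\zeta^{(\ell)}$, not of $\zeta$, and there are $\asymp\log T$ of them in the window $|\gamma_\ell-T|\le 1$; each can still contribute an angle of size up to $\pi$ along your horizontal segment, and you have given no cancellation or comparison mechanism that reduces their total to $O(\Phi(T))$. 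The partial-fraction integral you set up simply does not see $\zeta$ at all, so no ``comparison of the argument variations of $\zeta^{(\ell)}$ and of $\zeta$'' is available at that stage.

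The cleaner route --- and presumably what \cite{Sur} does --- is to make the decomposition $\arg G_\ell=\arg\zeta+\arg(G_\ell/\zeta)$ \emph{before} passing to zeros. The first summand is $\ll\Phi(T)$ by hypothesis. For the second, one bounds $\arg(G_\ell/\zeta)(\sigma+iT)$ by a Jensen/Backlund count of sign changes of $\Re(G_\ell/\zeta)$ on the segment, which reduces to an upper bound for $\log|\zeta^{(\ell)}/\zeta|$ on a slightly larger disk; on RH such bounds are available uniformly for $\sigma-\tfrac12\gg(\log\log T)^2/\log T$ and yield exactly $\ll_\ell\frac{\log\log T}{\sigma-1/2}$. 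This bypasses the problematic step in your sketch entirely.
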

\proof 
This is follows from \cite[Lemma 2.3]{Sur} by taking $\epsilon_0=(4\log{T})^{-1}$ there.
\qed

\begin{lem}\label{lem_zeta'}
	Let $\ell\geq1$ be an integer.
	For all $t$ sufficiently large we have
	$$
	\frac{G'_\ell}{G_\ell}(s)=\sum_{|\gamma_\ell-t|<1}\frac{1}{s-\rho_\ell}
	+O_\ell(\log{t}),
	$$
	uniformly for $1/2\le \sigma \le 1$.
\end{lem}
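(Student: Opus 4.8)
The plan is to adapt the classical proof of the corresponding formula for $\zeta'/\zeta$ (as in Titchmarsh, Theorem 9.6(A)) to the normalized function $G_\ell$, exploiting the fact that the normalization is designed so that $G_\ell\to 1$ at $+\infty$. First I would record the elementary identity
$$
\frac{G_\ell'}{G_\ell}(s) = \log 2 + \frac{\zeta^{(\ell+1)}}{\zeta^{(\ell)}}(s),
$$
which shows that it suffices to control the logarithmic derivative of $\zeta^{(\ell)}$, and that the zeros of $G_\ell$ are precisely the zeros $\rho_\ell$ of $\zeta^{(\ell)}$. The Dirichlet series $\zeta^{(\ell)}(s) = (-1)^\ell\sum_{n\ge 2}(\log n)^\ell n^{-s}$ gives $G_\ell(s) = 1 + \sum_{n\ge 3}(\log n/\log 2)^\ell (2/n)^s$, so $G_\ell(s)\to 1$ as $\sigma\to+\infty$. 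Fixing a large constant $\sigma_0=\sigma_0(\ell)$ for which $|G_\ell(\sigma+it)|\ge 1/2$ whenever $\sigma\ge\sigma_0-1$ then yields two facts I will use: $\zeta^{(\ell)}$ has no zeros in the half-plane $\sigma\ge\sigma_0-1$, and $\frac{G_\ell'}{G_\ell}(\sigma_0+it)=O_\ell(1)$ uniformly in $t$ (the numerator being bounded by an absolutely convergent series and the denominator bounded below).

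Next I would invoke Hadamard factorization. Since $\zeta^{(\ell)}$ has a pole of order $\ell+1$ at $s=1$ and is otherwise entire of order $1$, the entire function $(s-1)^{\ell+1}\zeta^{(\ell)}(s)$ has order $1$, and differentiating its product representation gives the partial-fraction expansion
$$
\frac{\zeta^{(\ell+1)}}{\zeta^{(\ell)}}(s) = -\frac{\ell+1}{s-1} + b_\ell + \sum_{\rho_\ell}\left(\frac{1}{s-\rho_\ell}+\frac{1}{\rho_\ell}\right)
$$
for a constant $b_\ell$. Subtracting the same expansion at $\sigma_0+it$ cancels $b_\ell$ and every $1/\rho_\ell$, while the polar terms are $O_\ell(1/t)$; combined with the identity and the bound on $\frac{G_\ell'}{G_\ell}(\sigma_0+it)$ above, this produces
$$
\frac{G_\ell'}{G_\ell}(s) = \sum_{\rho_\ell}\left(\frac{1}{s-\rho_\ell} - \frac{1}{\sigma_0 + it - \rho_\ell}\right) + O_\ell(1).
$$

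It then remains to estimate the sum, which I would split according to whether $|\gamma_\ell-t|<1$ or $|\gamma_\ell-t|\ge 1$. For the near zeros, the subtracted term satisfies $|\sigma_0+it-\rho_\ell|\ge\sigma_0-\beta_\ell>1$, so $\frac{1}{|\sigma_0+it-\rho_\ell|}<1$; since Berndt's formula gives $N_\ell(t+1)-N_\ell(t-1)=O_\ell(\log t)$, there are $O_\ell(\log t)$ such zeros and their subtracted terms contribute $O_\ell(\log t)$, leaving the main term $\sum_{|\gamma_\ell-t|<1}\frac{1}{s-\rho_\ell}$. For the far zeros I would write the summand as $\frac{\sigma_0-\sigma}{(s-\rho_\ell)(\sigma_0+it-\rho_\ell)}$ and use $|s-\rho_\ell|,\,|\sigma_0+it-\rho_\ell|\ge|t-\gamma_\ell|$ to bound it by $\ll_\ell|t-\gamma_\ell|^{-2}$; summing dyadically against the local counting bound $O_\ell(\log(t+m))$ gives $O_\ell(\log t)$. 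Collecting these estimates proves the lemma, and the whole argument is unconditional.

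The one structural input I must be careful about — and the step I regard as the main obstacle — is the upper bound $\beta_\ell<\sigma_0-1$ on the real parts of the zeros, which is exactly what lets me control $1/(\sigma_0+it-\rho_\ell)$ on the near zeros. This is where the normalization defining $G_\ell$ pays off: because $G_\ell\to 1$ at $+\infty$ I obtain a zero-free right half-plane with an explicit, $\ell$-dependent boundary and no appeal to RH. I would also stress that, unlike $\zeta$, the function $\zeta^{(\ell)}$ has no clean functional equation, so I deliberately avoid any step that would require symmetry across $\sigma=1/2$; placing the comparison point $\sigma_0+it$ far to the right is precisely what sidesteps this difficulty.
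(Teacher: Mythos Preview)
Your proposal is correct and is precisely the standard adaptation of Titchmarsh's Theorem~9.6(A) that the paper invokes without spelling out any details. One small point to tighten: the Hadamard sum runs over \emph{all} zeros of $\zeta^{(\ell)}$, including the infinitely many real ones near $-2j$ (cf.~\cite{Spi70}), and since these all have $\gamma_\ell=0$ your dyadic estimate $\ll_\ell|t-\gamma_\ell|^{-2}$ against Berndt's local count does not dispose of them; bounding those terms instead via $|s-\rho_\ell|\ge|\sigma-\beta_\ell|\gg j$ shows their total contribution is $\sum_j j^{-2}=O(1)$, and the argument goes through.
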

\proof This can be proved in a standard way. See Theorem 9.6 (A) in
\cite{Tit} for example. \qed

\begin{lem} \label{log.der.zeta_k}
Assume RH and let $\ell\geq1$ be an integer. Then
$$ \Re \frac{\zeta^{(\ell)}}{\zeta^{(\ell-1)}}(\sigma+it) < 0 $$
holds for $0<\sigma\leq1/2$ and sufficiently large $t$ whenever $\zeta^{(\ell-1)}(\sigma+it)\neq0$.
\end{lem}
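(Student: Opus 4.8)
The plan is to write $\frac{\zeta^{(\ell)}}{\zeta^{(\ell-1)}}(s) = \frac{d}{ds}\log\zeta^{(\ell-1)}(s)$ and to extract from the functional equation a large negative main term that forces the real part to be negative. First I would record the equivalent formulation $\Re\frac{\zeta^{(\ell)}}{\zeta^{(\ell-1)}}(s) = \frac{\partial}{\partial\sigma}\log|\zeta^{(\ell-1)}(\sigma+it)|$, so that the claim says $|\zeta^{(\ell-1)}|$ is strictly decreasing in $\sigma$ across the left half of the critical strip; this matches the heuristic that $|\chi(s)| = (t/2\pi)^{1/2-\sigma}$ grows as $\sigma$ decreases. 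For the base case $\ell=1$ the claim is clean: the partial-fraction (Hadamard) expansion gives $\Re\frac{\zeta'}{\zeta}(s) = \sum_\rho\frac{\sigma-1/2}{(\sigma-1/2)^2+(t-\gamma)^2} - \tfrac12\Re\frac{\Gamma'}{\Gamma}(s/2+1) + O(1)$, where on RH every term of the zero-sum is $\le 0$ for $\sigma\le 1/2$ while the archimedean term contributes $-\tfrac12\log t$, so $\Re\frac{\zeta'}{\zeta}(s)\le -\tfrac12\log t + O(1) < 0$ (here the hypothesis $\zeta(s)\ne 0$ keeps us off the poles).

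For general $\ell$ I would differentiate $\zeta(s)=\chi(s)\zeta(1-s)$ by Leibniz's rule to obtain $\zeta^{(\ell-1)}(s) = \chi(s)P_{\ell-1}(s)$, where $P_{\ell-1}(s) = \sum_{j=0}^{\ell-1}\binom{\ell-1}{j}(-1)^j\frac{\chi^{(\ell-1-j)}}{\chi}(s)\,\zeta^{(j)}(1-s)$. Using $\frac{\chi'}{\chi}(s) = -\log\frac{t}{2\pi}+O(1/t)$ and $\frac{d}{ds}\frac{\chi'}{\chi}(s)=O(1/t)$, one has $\frac{\chi^{(m)}}{\chi}(s) = \left(-\log\frac{t}{2\pi}\right)^m\bigl(1+O(1/\log t)\bigr)$, so the $j=0$ term $\bigl(-\log\frac{t}{2\pi}\bigr)^{\ell-1}\zeta(1-s)$ is the leading piece of $P_{\ell-1}$ wherever $\zeta(1-s)$ is not small. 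Then $\frac{\zeta^{(\ell)}}{\zeta^{(\ell-1)}}(s) = \frac{\chi'}{\chi}(s) + \frac{P_{\ell-1}'}{P_{\ell-1}}(s)$, the first term being a real main term of size $-\log\frac{t}{2\pi}$, and in the generic region $\frac{P_{\ell-1}'}{P_{\ell-1}}(s)\approx -\frac{\zeta'}{\zeta}(1-s)$. Since $\Re(1-s)=1-\sigma\ge 1/2$, the partial-fraction expansion of $\frac{\zeta'}{\zeta}$ at $1-s$ has a nonnegative zero-sum and the same archimedean term, giving $\Re\frac{\zeta'}{\zeta}(1-s)\ge -\tfrac12\log t+O(1)$; hence $\Re\frac{\zeta^{(\ell)}}{\zeta^{(\ell-1)}}(s)\le -\log\frac{t}{2\pi}+\tfrac12\log t+O(1) = -\tfrac12\log t+O(1)<0$.

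The hard part will be establishing this bound uniformly near the zeros of $\zeta$, i.e. at points where $\zeta(1-s)$ is small, which by the functional equation correspond to $s$ near a nontrivial zero of $\zeta$. There the $j=0$ term is no longer dominant and $P_{\ell-1}$ is instead governed by the first nonvanishing derivative $\zeta^{(j)}(1-s)$, so $\frac{P_{\ell-1}'}{P_{\ell-1}}(s)$ reduces to real-part estimates for ratios $\frac{\zeta^{(j)}}{\zeta^{(j-1)}}(1-s)$ to the right of the critical line. I expect to control these, again by partial-fraction/Hadamard bounds showing each such real part exceeds $-\log\frac{t}{2\pi}+O(1)$, so that the main term $\frac{\chi'}{\chi}(s)$ still wins, possibly bootstrapping in $\ell$; securing this uniform margin throughout $0<\sigma\le 1/2$, including the transition between the generic and near-zero regimes, is the crux of the proof.
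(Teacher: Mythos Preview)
Your base case $\ell=1$ is correct and is exactly the paper's argument specialised to $\zeta$. The problem is the induction you attempt for $\ell\ge 2$. You yourself flag the gap: your functional-equation decomposition $\zeta^{(\ell-1)}(s)=\chi(s)P_{\ell-1}(s)$ and the approximation $\frac{P_{\ell-1}'}{P_{\ell-1}}(s)\approx -\frac{\zeta'}{\zeta}(1-s)$ only hold where $\zeta(1-s)$ is not small; near zeta-zeros the leading term of $P_{\ell-1}$ disappears and the ratio is governed by a different combination of $\zeta^{(j)}(1-s)$. Your proposed fix, bounding ratios $\frac{\zeta^{(j)}}{\zeta^{(j-1)}}(1-s)$ from below and bootstrapping, is not fleshed out, and it is not clear how to get the uniform $-\tfrac12\log t + O(1)$ margin through the transition between regimes. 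As written, the argument is incomplete for $\ell\ge 2$.

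The paper avoids this difficulty entirely by \emph{not} reducing to $\zeta$ via the functional equation. Instead it runs your $\ell=1$ argument directly on $\zeta^{(\ell-1)}$: set $\xi_{\ell-1}(s)=\Gamma(s/2)\zeta^{(\ell-1)}(s)$ and take the Hadamard product, so that
\[
\Re\frac{\zeta^{(\ell)}}{\zeta^{(\ell-1)}}(s)
=\sum_{\rho_{\ell-1}}\frac{\sigma-\beta_{\ell-1}}{|s-\rho_{\ell-1}|^2}
-\tfrac12\Re\frac{\Gamma'}{\Gamma}\!\left(\tfrac{s}{2}\right)+O(1).
\]
Two external inputs dispose of the zeros with $\beta_{\ell-1}<1/2$: Levinson--Montgomery says RH forces $\zeta^{(\ell-1)}$ to have only finitely many non-real zeros left of the critical line, contributing $O(1)$; and Spira's result places the real zeros of $\zeta^{(\ell-1)}$ at $-2j+O(1)$, so their contribution cancels the $\Gamma$-poles up to $O(1)$. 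What remains is $\sum_{\beta_{\ell-1}\ge 1/2}\frac{\sigma-\beta_{\ell-1}}{|s-\rho_{\ell-1}|^2}-\tfrac12\log t+O(1)$, and for $\sigma\le 1/2$ every term of the sum is $\le 0$. This is a one-line finish with no case analysis near zeta-zeros; the essential point you are missing is that the Hadamard/partial-fraction mechanism applies to $\zeta^{(\ell-1)}$ itself, once Levinson--Montgomery gives you the analogue of RH for its non-real zeros.
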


\begin{proof}
Put
$$ \xi_\ell(s) := \Gamma\left(\frac{s}{2}\right) \zeta^{(\ell)}(s). $$

Then we have
$$
\frac{\xi'_{\ell-1}}{\xi_{\ell-1}}(s)
= \frac{1}{2} \frac{\Gamma'}{\Gamma}\left(\frac{s}{2}\right) + \frac{\zeta^{(\ell)}}{\zeta^{(\ell-1)}}(s).
$$
Using Hadamard factorization we easily see that for large $t$,
$$
\frac{1}{2} \frac{\Gamma'}{\Gamma}\left(\frac{s}{2}\right)
= \sum_{n=1}^\infty \left( \frac{1}{2n} - \frac{1}{s+2n} \right) + O(1)
$$
and
$$
\frac{\zeta^{(\ell)}}{\zeta^{(\ell-1)}}(s)
= \sum_{\rho_{\ell-1}} \left( \frac{1}{s-\rho_{\ell-1}} + \frac{1}{\rho_{\ell-1}} \right) + O(1),
$$
where $\rho_{\ell-1}$ runs over all zeros of $\zeta^{(\ell-1)}(s)$.
We can rewrite the latter as
$$
\frac{\zeta^{(\ell)}}{\zeta^{(\ell-1)}}(s)
= \left(\sum_{\beta_{\ell-1}\geq1/2} + \sum_{\substack{\beta_{\ell-1}<1/2,\\ \gamma_{\ell-1}\neq0}} + \sum_{\gamma_{\ell-1}=0}\right) \left( \frac{1}{s-\rho_{\ell-1}} + \frac{1}{\rho_{\ell-1}} \right) + O(1).
$$
By \cite[Corollary of Theorem 7]{LM}, RH implies that $\zeta^{(\ell)}(s)$ has at most finitely many non-real zeros in $\Re (s)<1/2$.
This implies that the second sum is $O(1)$.
Meanwhile \cite{Spi70} shows that
$$
\sum_{\gamma_{\ell-1}=0} \left( \frac{1}{s-\rho_{\ell-1}} + \frac{1}{\rho_{\ell-1}} \right)
= \sum_{j=1}^\infty \left( \frac{1}{s-(-2j+O(1))} + \frac{1}{-2j+O(1)} \right).
$$
Thus
\begin{align*}
\frac{\xi'_{\ell-1}}{\xi_{\ell-1}}(s)
&= \frac{1}{2} \frac{\Gamma'}{\Gamma}\left(\frac{s}{2}\right) + \frac{\zeta^{(\ell)}}{\zeta^{(\ell-1)}}(s) \\
&= \sum_{n=1}^\infty \left( \frac{1}{2n} - \frac{1}{s+2n} \right) + O(1) \\
&\qquad+ \sum_{\beta_{\ell-1}\geq1/2} \left( \frac{1}{s-\rho_{\ell-1}} + \frac{1}{\rho_{\ell-1}} \right)
+ \sum_{j=1}^\infty \left( \frac{1}{s-(-2j+O(1))} + \frac{1}{-2j+O(1)} \right)
+ O(1) \\
&= \sum_{\beta_{\ell-1}\geq1/2} \left( \frac{1}{s-\rho_{\ell-1}} + \frac{1}{\rho_{\ell-1}} \right) + O(1)
\end{align*}
when $t$ is large.

Taking the real part, we have
$$
\Re \frac{\xi'_{\ell-1}}{\xi_{\ell-1}}(s)
= \sum_{\beta_{\ell-1}\geq1/2} \frac{\sigma-\beta_{\ell-1}}{|s-\rho_{\ell-1}|^2} + O(1).
$$
Hence using Stirling's formula for the Gamma function, we have
\begin{equation} \label{eq:zeta-j_log.der.}
\Re \frac{\zeta^{(\ell)}}{\zeta^{(\ell-1)}}(\sigma+it)
= \Re \frac{\xi'_{\ell-1}}{\xi_{\ell-1}}(\sigma+it) - \frac{1}{2} \Re \frac{\Gamma'}{\Gamma}\left(\frac{\sigma+it}{2}\right)
= \sum_{\beta_{\ell-1}\geq1/2} \frac{\sigma-\beta_{\ell-1}}{|s-\rho_{\ell-1}|^2}
- \frac{1}{2}\log{t} + O(1),
\end{equation}
which is negative for $\sigma\leq1/2$ and $t$ large.
\end{proof}

\begin{lem}\label{lem_line} Assume RH.
	Let $\mathcal{Z}_\ell=\{z_i\}_i$ be the collection of distinct ordinates of zeros of $\zeta, \zeta',...,\zeta^{(\ell)}$ on $\Re (s)=1/2$.
For large $T$ and $Y\leq T$, we have
$$\sum_{T<z\le T+Y,\, z\in \mathcal{Z}_\ell} 1 \ll \Phi(2T)+Y\log{T}. $$
\end{lem}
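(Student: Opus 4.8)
The plan is to show that, once $T$ is large, every ordinate in $\mathcal{Z}_\ell$ lying in $(T,T+Y]$ is in fact the ordinate of a zero of $\zeta$ itself on the critical line; the claimed bound then follows at once from the Riemann-von~Mangoldt formula together with the standing hypothesis $E_0(T)\ll\Phi(T)$. The entire weight of the argument rests on the \emph{strict} inequality supplied by Lemma~\ref{log.der.zeta_k}.

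First I would record the following consequence of that lemma: for each $1\le j\le\ell$ and all sufficiently large $t$, if $\zeta^{(j)}(1/2+it)=0$ then necessarily $\zeta^{(j-1)}(1/2+it)=0$ as well. Indeed, suppose instead that $\zeta^{(j)}(1/2+it_0)=0$ while $\zeta^{(j-1)}(1/2+it_0)\ne0$. Then the ratio $\frac{\zeta^{(j)}}{\zeta^{(j-1)}}(1/2+it_0)$ is well defined and equals $0$, so its real part is $0$; but Lemma~\ref{log.der.zeta_k}, applied with $\sigma=1/2$ and the index $j$, forces this real part to be strictly negative, a contradiction. Iterating this implication downward from $j=\ell$ to $j=1$ shows that any zero of any of $\zeta',\dots,\zeta^{(\ell)}$ lying on $\Re(s)=1/2$ at large height is automatically a zero of $\zeta$ on the critical line.

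Consequently, choosing $T$ large enough that the previous step applies throughout $(T,T+Y]$ (recall $Y\le T$, so $T+Y\le 2T$), the distinct ordinates in $\mathcal{Z}_\ell\cap(T,T+Y]$ are all ordinates of zeros of $\zeta$ on the critical line, whence
$$
\sum_{T<z\le T+Y,\, z\in\mathcal{Z}_\ell}1 \le N(T+Y)-N(T),
$$
the right-hand side counting $\zeta$-zeros with multiplicity and thus bounding the number of distinct ordinates. It remains only to estimate this difference. By the Riemann-von~Mangoldt formula,
$$
N(T+Y)-N(T)=\frac{T+Y}{2\pi}\log\frac{T+Y}{2\pi e}-\frac{T}{2\pi}\log\frac{T}{2\pi e}+E_0(T+Y)-E_0(T).
$$
Since $T+Y\le 2T$, the mean value theorem bounds the main-term difference by $\frac{Y}{2\pi}\log\frac{T}{\pi}\ll Y\log T$, while the monotonicity of $\Phi$ together with $E_0(T)\ll\Phi(T)$ gives $E_0(T+Y)-E_0(T)\ll\Phi(2T)$. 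Combining, we obtain $\sum_{T<z\le T+Y,\,z\in\mathcal{Z}_\ell}1\ll\Phi(2T)+Y\log T$, as required.

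The only genuine obstacle is the first step: passing from the qualitative sign information in Lemma~\ref{log.der.zeta_k} to the rigid structural conclusion that the critical-line zeros of the derivatives collapse onto those of $\zeta$. Once this collapse is in hand, the remainder is a routine Riemann-von~Mangoldt count, and no separate estimate for the derivatives' zeros is needed at all.
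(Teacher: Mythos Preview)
Your proposal is correct and follows essentially the same approach as the paper: use Lemma~\ref{log.der.zeta_k} to force every critical-line zero of $\zeta^{(j)}$ at large height to be a zero of $\zeta^{(j-1)}$, iterate down to $\zeta$, and then bound by $N(T+Y)-N(T)\ll\Phi(2T)+Y\log T$. The paper states this in two lines; you have simply unpacked the contradiction argument and the Riemann--von~Mangoldt estimate in more detail.
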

\proof
It follows from Lemma \ref{log.der.zeta_k} that for all $j\in\mathbb{N}$, zeros of $\zeta^{(j)}$ on the critical line at large heights can only occur at zeros of $\zeta^{(j-1)}$.
Therefore, for sufficiently large $T$,
$$ \sum_{T<z\le T+Y,\, z\in \mathcal{Z}_\ell} 1 \leq \sum_{T<\gamma\le T+Y} 1 \ll \Phi(2T)+Y\log{T}. $$
\qed 


Write $\mathcal{D}=\mathcal{D}(T)$ for the region $\{w: \Re w\ge 1/2, |\Im w-T| \le 1\}$. Divide $\mathcal{D}$ into $N$ parts, as follows.
Let $B_j=\{w: 1/2\le \Re w\le 1/2+Y_j, |\Im w-T|\leq Y_j\}$ where $Y_j=2^jX$ and $X=(\log{T})^{-1/2}$.
We can write $\mathcal{D}=\cup_{j=1}^N R_j$ where $R_1=B_1$ and $R_j=(B_j-B_{j-1})\cap \mathcal D$ for $j\ge 2$. Note that $2^NX \approx 1$.

 A key ingredient in~\cite{Ge} is an upper bound for the number of zeros of $\zeta'(s)$ in regions like $R_j$'s. To prove our Theorem~\ref{thm_general} we need such bounds for higher derivatives, and the following result provides us the desired estimates.

\begin{lem}\label{lem box}
	Let $N_k(R_j)$ be the number of zeros of $\zeta^{(k)}$ in $R_j$.
	Then $N_k(R_j) \ll_k Y_j\log{T}+\Phi(2T)$.
\end{lem}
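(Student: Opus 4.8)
The plan is to prove the stronger statement $N_k(B_j)\ll_k Y_j\log T+\Phi(2T)$ for the number of zeros in the whole rectangle $B_j$, which suffices since $R_j\subseteq B_j$. The key structural input is that, by \cite[Corollary of Theorem~7]{LM} (already used in the proof of Lemma~\ref{log.der.zeta_k}), RH forces $\zeta^{(k)}$, and hence $G_k$, to have only finitely many zeros with $\Re s<1/2$; thus for $T$ large every zero counted by $N_k(B_j)$ has $\beta_k\ge1/2$. I would exploit this by applying Littlewood's lemma to $G_k$ not on $B_j$ but on the left-extended rectangle $\mathcal R_j^\ast=\{s:\ 1/2-Y_j\le\Re s\le 1/2+Y_j,\ |\Im s-T|\le Y_j\}$, whose left edge sits in the region where $G_k$ has no zeros. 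Since no zeros lie in $1/2-Y_j\le\Re s<1/2$, one has $N_k(\mathcal R_j^\ast)=N_k(B_j)$, and now \emph{every} counted zero carries weight $\beta_k-(1/2-Y_j)\ge Y_j$. Littlewood's lemma then gives
\begin{equation*}
2\pi Y_j\,N_k(B_j)\ \le\ 2\pi\sum_{\rho_k\in\mathcal R_j^\ast}\Big(\beta_k-\tfrac12+Y_j\Big)\ =\ \big(I_{\mathrm{left}}-I_{\mathrm{right}}\big)+I_{\mathrm{hor}},
\end{equation*}
where $I_{\mathrm{left}},I_{\mathrm{right}}$ are the integrals of $\log|G_k|$ along the vertical edges $\Re s=1/2-Y_j$ and $\Re s=1/2+Y_j$, and $I_{\mathrm{hor}}$ is the integral over $\sigma\in[1/2-Y_j,1/2+Y_j]$ of $\arg G_k(\sigma+i(T+Y_j))-\arg G_k(\sigma+i(T-Y_j))$. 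It then suffices to bound the right-hand side by $O_k\big(Y_j(Y_j\log T+\Phi(2T))\big)$; dividing by $2\pi Y_j$ yields the claim.

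To bound $I_{\mathrm{hor}}$ I would split the $\sigma$-range at the critical line. On the part $\sigma>1/2+(\log\log T)^2/\log T$, which exhausts all but a negligible sub-interval because $Y_j\ge(\log T)^{-1/2}$, Lemma~\ref{lem_arg_g_over_zeta} gives $\arg G_k(\sigma+iT')\ll_k\Phi(2T)+(\log\log T)/(\sigma-1/2)$, whose $\Phi$-term integrates to $\ll Y_j\Phi(2T)$ over a $\sigma$-interval of length $2Y_j$. On the remaining part $\sigma\le1/2$, where $G_k$ is zero-free, I would rewrite each vertical difference as $\int_{T-Y_j}^{T+Y_j}\Re\frac{G_k'}{G_k}(\sigma+it)\,dt$ and invoke Lemma~\ref{lem_zeta'}: the $O_k(\log T)$ term contributes $O_k(Y_j^2\log T)$ over the box, while in the finite sum every nearby zero has $\beta_k\ge1/2$, so for $\sigma\le1/2$ the summands $(\sigma-\beta_k)/|\sigma+it-\rho_k|^2$ keep a single sign and integrate into bounded arctangent contributions. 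Summing these per-zero contributions, the zeros within distance $O(Y_j)$ dominate and, weighted by $\beta_k-1/2\ll Y_j$, produce a total of the desired size $O_k(Y_j^2\log T+Y_j\Phi(2T))$.

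The two vertical integrals $I_{\mathrm{left}}$ and $I_{\mathrm{right}}$ I would treat as genuine mean values of $\log|G_k|$ on the edges $\Re s=1/2\mp Y_j$, bounding them via the log-derivative estimate of Lemma~\ref{lem_zeta'} together with the one-sided control of Lemma~\ref{log.der.zeta_k}, which pins down the sign of the relevant logarithmic derivative for $\sigma\le1/2$; the target is again $O_k(Y_j^2\log T+Y_j\Phi(2T))$. The zeros lying exactly on $\Re s=1/2$ are the ordinates of $\mathcal Z_k$ in $[T-Y_j,T+Y_j]$, so Lemma~\ref{lem_line} bounds their number by $\ll\Phi(2T)+Y_j\log T$ directly; this also lets me perturb the horizontal edges of $\mathcal R_j^\ast$ off the zeros so that Littlewood's lemma applies cleanly. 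The extreme box $j=N$, where $Y_N\asymp1$, needs no refinement, since the trivial count $O_k(\log T)$ of zeros with $|\gamma_k-T|\le 1$ is already $\ll Y_N\log T$.

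The step I expect to be the main obstacle is the control of the boundary integrals \emph{close to the critical line}: there both $\log|G_k|$ and $\arg G_k$ are large and oscillatory, and naive pointwise estimates lose factors of $\log\log T$ that would swamp the target $Y_j\log T+\Phi(2T)$ for the thinnest boxes (those with $Y_j$ near $(\log T)^{-1/2}$). Avoiding this loss requires using the RH-conditional bound of Lemma~\ref{lem_arg_g_over_zeta} in tandem with the fixed-sign information coming from $\beta_k\ge1/2$ and Lemma~\ref{log.der.zeta_k}, and exploiting the cancellation already present in the argument variation through Lemma~\ref{lem_zeta'}, rather than estimating each boundary piece in absolute value.
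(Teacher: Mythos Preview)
Your proposal is a plan via Littlewood's lemma rather than a completed proof, and the obstacle you flag in your last paragraph is real and not overcome by what you write. Concretely, the claim that for $\sigma\le 1/2$ the per-zero arctangent contributions are ``weighted by $\beta_k-1/2\ll Y_j$'' is incorrect: the $t$-integral $\int_{T-Y_j}^{T+Y_j}\frac{\sigma-\beta_k}{(\sigma-\beta_k)^2+(t-\gamma_k)^2}\,dt$ is an angle of size $O(1)$ for each nearby zero, with no factor of $\beta_k-1/2$. Summing over the $O(\log T)$ zeros with $|\gamma_k-T|<1+Y_j$ and then integrating over $\sigma\in[1/2-Y_j,1/2]$ gives only $O(Y_j\log T)$ for this portion of $I_{\mathrm{hor}}$, which after division by $Y_j$ recovers merely the trivial bound $N_k(B_j)\ll\log T$. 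The same circularity afflicts your sketch for $I_{\mathrm{left}}$ and $I_{\mathrm{right}}$: bounding $\log|G_k|$ on $\Re s=1/2\pm Y_j$ via Lemma~\ref{lem_zeta'} produces $\sum_{|\gamma_k-t|<1}\log|s-\rho_k|+O(\log T)$, and the near-critical zeros---precisely those you are trying to count---each drag in a $\log Y_j$ that you cannot discard without already knowing their number. Your use of Lemma~\ref{lem_arg_g_over_zeta} on the right half of $I_{\mathrm{hor}}$ also silently drops the term $\int\frac{\log\log T}{\sigma-1/2}\,d\sigma\asymp(\log\log T)^2$, which for the thinnest box $Y_1=(\log T)^{-1/2}$ already exceeds the target $Y_1^2\log T+Y_1\Phi(2T)$ when $\Phi$ is small.

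The paper takes a genuinely different route that avoids boundary integrals of $\log|G_k|$ altogether. It bounds $N_k(R_j^\ast)$ by the angle sum $\sum_{\rho_k\in R_j^\ast}\Theta\big(\rho_k;\,1/2+i(T+Y_j),\,1/2+i(T-Y_j)\big)$ and rewrites this as $\int_{T-Y_j}^{T+Y_j}F_k(t)\,dt$ with $F_k(t)=-\Re\frac{\zeta^{(k+1)}}{\zeta^{(k)}}(1/2+it)+O(\log t)$, an integral \emph{only along the critical line}. The decisive idea---absent from your outline---is to partition $[T-Y_j,T+Y_j]$ at the points of $\mathcal Z_k$ and, on each subinterval $(z_i,z_{i+1})$, to factor
\[
\zeta^{(k)}(1/2+it)=\Big(h\zeta\cdot\frac{\zeta'}{\zeta}\cdot\frac{\zeta''}{\zeta'}\cdots\frac{\zeta^{(k)}}{\zeta^{(k-1)}}\cdot h^{-1}\Big)(1/2+it),\qquad h(s)=\pi^{-s/2}\Gamma(s/2).
\]
Then $\Delta\arg(h\zeta)=0$ by the functional equation, each $\Delta\arg\big(\zeta^{(l)}/\zeta^{(l-1)}\big)\ll 1$ because Lemma~\ref{log.der.zeta_k} fixes the sign of its real part, and $\Delta\arg h^{-1}\ll(z_{i+1}-z_i)\log T$ by Stirling. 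Hence $\int_{z_i}^{z_{i+1}}F_k\ll_k 1+(z_{i+1}-z_i)\log T$, and summing over $i$ via Lemma~\ref{lem_line} gives the bound. This factorization is precisely what converts the ``$O(1)$ per zero'' obstruction into an ``$O(1)$ per subinterval'' contribution, the step your Littlewood-lemma approach does not supply.
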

\proof
Let $R_j^*$ be $R_j$ without the left side boundary on the critical line. In view of Lemma~\ref{lem_line} it suffices to prove
$N_k(R_j^*) \ll_k Y_j\log{T}+\Phi(2T)$.
Denote by $\Theta(\rho_k;1/2+i(T+Y_j),1/2+i(T-Y_j))\in(0,\pi)$ the argument of the angle at $\rho_k$ with two rays through $1/2+i(T-Y_j)$ and $1/2+i(T+Y_j)$.
Note that $\Theta(\rho_k;1/2+i(T+Y_j),1/2+i(T-Y_j))\gg1$ if $\rho_k \in R_j^*$. Thus
\begin{align}\label{eq R*}
N_k(R_j^*)& \ll \sum_{\rho_k\in R_j^*} \Theta(\rho_k;1/2+i(T+Y_j),1/2+i(T-Y_j)) \notag\\
&= \sum_{\rho_k\in R_j^*}\int_{T-Y_j}^{T+Y_j}\frac{\beta_k-1/2}{(\beta_k-1/2)^2+(\gamma_k-t)^2}dt\notag\\
&= \int_{T-Y_j}^{T+Y_j} \sum_{\rho_k\in R_j^*} \frac{\beta_k-1/2}{(\beta_k-1/2)^2+(\gamma_k-t)^2}dt\notag\\
&\le \int_{T-Y_j}^{T+Y_j}
\sum_{\beta_k>1/2}\frac{\beta_k-1/2}{(\beta_k-1/2)^2+(\gamma_k-t)^2}dt \notag\\
&\le \sum_{\substack{T-Y_j\le z_{i}\le T+Y_j,\\ z_{i}\in \mathcal{Z}_k}} \int_{z_{i}}^{z_{i+1}}
\sum_{\beta_k>1/2}\frac{\beta_k-1/2}{(\beta_k-1/2)^2+(\gamma_k-t)^2}dt.
\end{align}
Write $$ F_k(t)=\sum_{\beta_k>1/2} \frac{\beta_k-1/2}{(\beta_k-1/2)^2+(\gamma_k-t)^2}. $$
Recall \eqref{eq:zeta-j_log.der.} that
$$F_k(t) = -\Re\frac{\zeta^{(k+1)}}{\zeta^{(k)}}(1/2+it)+O(\log t). $$
We claim that
$$
\int_{z_{i}}^{z_{i+1}} F_k(t) dt \ll 1+\log{T}\cdot(z_{i+1}-z_{i}).
$$
To prove this, note that for $t$ on the segment $(z_{i},z_{i+1})$ we can write
$$
\zeta^{(k)}(1/2+it) = \left( h\zeta\cdot \frac{\zeta'}{\zeta}\cdot \frac{\zeta''}{\zeta'}\cdots \frac{\zeta^{(k)}}{\zeta^{(k-1)}}\cdot \frac{1}{h} \right) (1/2+it)
$$
where $h(s)=\pi^{-s/2}\Gamma(s/2)$.
Thus, by using the temporary notation $\Delta\arg$ to denote the argument change along the segment $(z_{i},z_{i+1})$, we have
\begin{align*}
\int_{z_{i}}^{z_{i+1}} F_k(t) dt
&= \int_{z_{i}}^{z_{i+1}} \left(-\Re\frac{\zeta^{(k+1)}}{\zeta^{(k)}}(1/2+it)+O(\log t)\right)dt \\
&= \left|\Delta\arg\zeta^{(k)}(1/2+it)\right|+O\left((z_{i+1}-z_{i})\log{t}\right) \\
&= \Bigg| \Delta\arg(h(1/2+it)\zeta(1/2+it))+\Delta\arg\frac{\zeta'}{\zeta}(1/2+it)+\cdots+\Delta\arg\frac{\zeta^{(k)}}{\zeta^{(k-1)}}(1/2+it) \\
&\qquad+ \Delta\arg\frac{1}{h(1/2+it)} \Bigg| + \left((z_{i+1}-z_{i})\log{t}\right) \\
&\le \left| \Delta\arg(h(1/2+it)\zeta(1/2+it)) \right|
+ \sum_{l=1}^k \left| \Delta\arg\frac{\zeta^{(l)}}{\zeta^{(l-1)}}(1/2+it) \right| \\
&\qquad+ \left| \Delta\arg\frac{1}{h(1/2+it)} \right| + \left((z_{i+1}-z_{i})\log{t}\right).
\end{align*}
It follows from the well-known functional equation for $h(s)\zeta(s)$ that 
$$
\Delta\arg(h(1/2+it)\zeta(1/2+it))=0.
$$
From Lemma \ref{log.der.zeta_k}, we have
$$ \Delta\arg\frac{\zeta^{(l)}}{\zeta^{(l-1)}}(1/2+it) \ll 1 $$
for $l=1,2,\cdots,k$ and $t\in(z_{i},z_{i+1})$.
Moreover, by Stirling's formula we obtain
\begin{align*}
 \left| \Delta\arg\frac{1}{h(1/2+it)} \right| \ll \int_{z_j}^{z_{j+1}}\left|\frac{h'}{h}(1/2+it)\right|dt\\
 \ll (z_{j+1}-z_j)\log{T}.
\end{align*}

Thus $$ \int_{z_{i}}^{z_{i+1}} F_k(t) dt \ll_k 1 + (z_{i+1}-z_{i})\log{T}, $$
as claimed.

It then follows from~\eqref{eq R*} and Lemma~\ref{lem_line} that
\begin{align*}
N_k(R_j^*)
&\ll_k \sum_{\substack{T-Y_j\le z_{i}\le T+Y_j,\\ z_{i}\in \mathcal{Z}_k}} \left( 1 + (z_{i+1}-z_{i})\log{T} \right) \\
&\ll \sum_{\substack{T-Y_j\le z_{i}\le T+Y_j,\\ z_{i}\in \mathcal{Z}_k}} 1
+ (\log{T}) \cdot \sum_{\substack{T-Y_j\le z_{i}\le T+Y_j,\\ z_{i}\in \mathcal{Z}_k}} (z_{i+1}-z_{i}) \\
&\ll Y_j\log{T}+\Phi(2T).
\end{align*}
\qed


\section{Proof of Theorem~\ref{thm_general}}

Applying Lemma \ref{lem N1=arg..}, we only need to show that
$$
\arg G_k (1/2+iT) \ll_k \Phi(2T) + \sqrt{\log{T}}\log\log{T}
$$
holds for all $k\in\mathbb{N}$.

Let $X=1/\sqrt{\log{T}}$ as defined in the paragraph preceding Lemma \ref{lem box}.
From Lemma \ref{lem_arg_g_over_zeta}, we see that
$$
\arg G_k(1/2+X+iT) \ll_k \Phi(T)+ \sqrt{\log{T}}\log\log{T}.
$$
It remains to show
\begin{align}\label{eq Delta}
	\Delta:= \arg G_k(1/2+iT)-\arg G_k(1/2+X+iT)\ll\Phi(2T)+\sqrt{\log{T}}\log\log{T}.
\end{align}

From Lemma \ref{lem_zeta'}, we have
\begin{align}\label{eq Delta 2}
|\Delta|&= \left| \Im \int_{1/2}^{1/2+X}\frac{G'_k}{G_k}(\sigma+iT)d\sigma \right| \notag \\
&\ll \sum_{|\gamma_k-T|<1}\Theta(\rho_k;1/2+iT,1/2+X+iT)+X\log{T},
\end{align}
where $\Theta(a;b,c)$ is the (positive) angle at $a$ in the triangle $abc$. 
Hence, it suffices to show that
$$
\sum_{|\gamma_k-T|<1}\Theta(\rho_k;1/2+iT,1/2+X+iT)\ll \Phi(2T)+\sqrt{\log{T}}\log\log{T}.
$$

From \cite[Corollary of Theorem 7]{LM}, RH implies that for sufficiently large $T$, $\zeta^{(k)}$ has no zeros in the left half of the critical strip above $T-1$. Hence we may assume that
$$
\sum_{|\gamma_k-T|<1} \Theta(\rho_k;1/2+iT,1/2+X+iT) = \sum_{\rho_k\in \mathcal{D}} 
\Theta(\rho_k;1/2+iT,1/2+X+iT)
$$
where 
$\mathcal{D}$ is the region defined in the paragraph preceding Lemma \ref{lem box}.
Using the expression $\mathcal{D}=\cup_{j=1}^N R_j$ and Lemma~\ref{lem box} we have
\begin{align*}
	\sum_{\rho_k\in \mathcal{D}} \Theta(\rho_k;1/2+iT,1/2+X+iT)
	&=\sum_{j=1}^{N} \sum_{\rho_k\in R_j} \Theta(\rho_k;1/2+iT,1/2+X+iT) \\
	&\ll \sum_{j=1}^{N} N_k(R_j) \frac{X}{Y_j}\\
	&\ll_k \sum_{j=1}^{N} (2^j X \log{T}+\Phi(2T)) \frac{1}{2^j} \\
	&\ll \Phi(2T) + XN\log{T}.
	\end{align*}
Recall that $X=\frac{1}{\sqrt{\log{T}}}$ and $N\ll_k \log(1/X)\ll_k \log\log{T}$. Thus the above bound is
	$$
	\ll_k \Phi(2T) + \sqrt{\log{T}}\log\log{T}
	$$
	as desired. \qed
	
\section{Other zeta and $L$-functions}\label{sec gen}
Our method works well for some other zeta and $L$-functions in the $T$-aspect. Below we give two examples of the first derivative of Selberg zeta functions and Dirichlet $L$-functions, respectively. Dealing with higher derivatives of these functions would require information about the ``trivial'' zeros of those derivatives, which is not the purpose of this paper.

First, let us consider the Selberg zeta functions on cocompact hyperbolic surfaces. Precisely, let $X$ be a compact Riemann surface of genus $g \ge 2$, and let $Z_X(s)$ be the associated Selberg zeta function. Denote by $\mathcal N(T)$ and $\mathcal N_1(T)$ the zero counting functions for $Z_X(s)$ and $Z_X'(s)$, respectively; so $\mathcal N(T)$ is the number of nontrivial zeros of $Z_X(s)$ up to height $T$, and similarly for $\mathcal N_1(T)$.
Weyl's law tells us that 
$$
\mathcal N(T)=C_X T^2+O\left(\frac{T}{\log{T}}\right)
$$
where $C_X$ is a specific constant depending on $X$.
In~\cite{Luo} W. Luo proved that
$$
\mathcal N_1(T)=C_X T^2+O(T).
$$
Following our method \textit{in an identical manner}, we can prove that
\begin{align}\label{eq sel zeta}
\mathcal N_1(T)=C_X T^2+D_X T+O\left(\frac{T}{\log{T}}\right),
\end{align}
where $D_X$ is a specific constant depending on $X$. Thus~\eqref{eq sel zeta} improves Luo's result.
(Precisely, $C_X=g-1$ and $D_X=-\log N(P_{00})/2\pi$ where $N(P_{00})=\min_{P_0}N(P_{0})$; see page 1143 in~\cite{Luo} for more explanation of the notation.)
The estimate~\eqref{eq sel zeta} was proved by the first author (unpublished) using a different method, but our method here is simpler. 

As another example, let $L(s,\chi)$ be the Dirichlet $L$-function where $\chi$ is a primitive Dirichlet character to the modulus $q$. Let $N(T,\chi)$ be the number of nontrivial zeros of $L(s,\chi)$ with heights between $-T$ and $T$. Define $N_1(T,\chi)$ similarly as the zero counting function for $L'(s,\chi)$. It follows from Selberg's work~\cite{Sel} that on the generalized Riemann hypothesis (GRH)
\begin{align}\label{eq Sel}
N(T,\chi)=\frac{T}{\pi}\log \frac{qT}{2\pi e} + O\left( \frac{\log{qT}}{\log\log{qT}} \right).
\end{align}
As for $L'(s,\chi)$, recently the first author~\cite{Ge2} proved that on GRH we have
\begin{align}\label{eq Ge}
N_1(T,\chi) = \frac{T}{\pi}\log \frac{qT}{2m\pi e} + O\left(\frac{\log{qT}}{\log\log{qT}} + \sqrt{m\log 2m\log{qT}}\right),
\end{align}
where $m$ is the smallest prime number not dividing $q$. This improves earlier work of the second author~\cite{Sur2}. Our method here should give analogues of~\eqref{eq Ge} for higher derivatives of $L(s,\chi)$ once some standard information on trivial zeros of these derivatives is gathered. As a final remark, we note that in the $T$-aspect the error term in~\eqref{eq Ge} is as good as that in~\eqref{eq Sel}. However, in the $q$-aspect $\sqrt{m\log 2m\log{qT}}$ might sometimes be larger than $\frac{\log{qT}}{\log\log{qT}}$. In fact, simple calculation shows that 
$$ \sqrt{m\log 2m\log{qT}}\ll \frac{\log{qT}}{\log\log{qT}} $$
when $m$ is no greater than $\log{qT}/(\log\log{qT})^3$, while the largest possible value for $m$ is about $\log q$. It would be of interest to see if one can remove the second term in the error of~\eqref{eq Ge}.


\section*{Acknowledgement}
This project started when the first author was a departmental postdoc fellow at the University of Waterloo
and the second author was a member of iTHEMS under RIKEN Special Postdoctoral Researcher program.
The second author is supported by JSPS KAKENHI Grant Number 18K13400.

\end{document}